\newtheorem{theorem}{Theorem}
\newtheorem{definition}{Definition}
\newtheorem{lemma}{Lemma}
\newtheorem{proposition}{Proposition}
\newtheorem{corollary}{Corollary}
\newtheorem*{theorem*}{Theorem}
\newtheorem*{observation*}{Observation}
\newtheorem*{question*}{Question}
\newtheorem*{lemma*}{Lemma}
\DeclareMathOperator{\N}{\mathbb N}
\DeclareMathOperator{\Z}{\mathbb Z}
\DeclareMathOperator{\R}{\mathbb R}
\DeclareMathOperator{\C}{\mathbb C}
\DeclareMathOperator\Aut{Aut}
\newcommand{\Px}{\mathcal{P}_x}
\title{\Large \bf Quotient order density of ordinary triangle groups }
\author{Darius Young}
\date{9 August 2024}
\begin{document}

\maketitle
\begin{abstract}
For positive integers $r$, $s$ and $t$, the ordinary triangle group $\Delta^+(r,s,t)$ is the 
group with presentation $\langle\, x,y,z \;|\; x^r = y^s = z^t = xyz =1 \,\rangle$, 
and is infinite if and only if $1/r+1/s+1/t \le 1$. 
In this paper, it is shown that the natural density (among the positive integers) 
of the orders of the finite quotients of every such group is zero, 
taking inspiration from a 1976 theorem of Bertram on large cyclic subgroups of finite groups, 
and the Turan-Kubilius inequality from asymptotic number theory. 
This answers a challenging question raised by Tucker, based on some work 
for special cases by May and Zimmerman, and himself. 
\end{abstract}

\section{Introduction}
\label{sec:Intro}

The purpose of this paper is to prove that the natural density
of the orders of the finite quotients of every infinite ordinary hyperbolic triangle group is zero.

For positive integers $r$, $s$ and $t$, the ordinary triangle group $\Delta^+(r,s,t)$ is the 
group with presentation
$$
\Delta^+(r,s,t) = \langle\, x,y,z \;|\; x^r = y^s = z^t = xyz = 1 \,\rangle, 
$$
or more simply, $\langle\, x,y \;|\; x^r = y^s = (xy)^t = 1 \,\rangle$. 
This is a Fuchsian group and hence plays a role in the study of group actions on algebraic curves, compact Riemann surfaces 
and regular maps on surfaces, and is said to be  {\em spherical}, {\em Euclidean\/} (or {\em toroidal\/}), 
or {\em hyperbolic}, depending on whether $\frac{1}{r}+\frac{1}{s}+\frac{1}{t}$ is greater than $1$, 
equal to $1$ or less than $1$, respectively. 

In the spherical case, every such group is finite (and is cyclic or dihedral, or isomorphic to one 
of $A_4$, $S_4$ and $A_5$), while in the Euclidean case, the triangle groups are infinite but soluble, 
and in the hyperbolic case they are infinite but insoluble.  
Moreover, every hyperbolic triangle group $\Delta^+ = \Delta^+(r,s,t)$ is residually finite 
(by a theorem of Mal'cev \cite{Malcev} on Fuchsian groups), and also `SQ-universal', 
meaning that every countable group is isomorphic to a subgroup of some quotient of $\Delta^+$; see \cite{Neumann}. 

Finite quotients of triangle groups have been of great interest in many parts of mathematics for quite some time, 
and not only in group theory but also in complex analysis, geometry, graph theory and topology, dating back 
to the 1893 theorem of Hurwitz on the largest number of conformal automorphisms of a compact 
Riemann surface of genus $g > 1$, namely $84(g-1)$, and other work around that time by various mathematicians including Burnside, Dyck and Klein. 

Of particular interest are the so-called {\em smooth\/} quotients, which are those in which the 
orders of the group generators $x$, $y$ and $z$ ($= (xy)^{-1}$) are preserved, or equivalently, 
those obtained as images of a homomorphism with torsion-free kernel.
While we will not often mention smooth quotients explicitly, we note that the main theorem of this paper also holds for smooth finite quotients, since those form a subset of all quotients.

Closely related to each ordinary triangle group $\Delta^+(r,s,t)$ is the full triangle group 
$$
\Delta(r,s,t) = \langle\, a,b,c \;|\; a^2 = b^2 = c^2 = (ab)^r = (bc)^s = (ac)^t = 1 \,\rangle, 
$$
which clearly contains $\Delta^+(r,s,t)$ as a subgroup of index $2$, via the assignment $(x,y,z) = (ab,bc,ca)$. 
Full triangle groups allow the consideration of reflections (orientation-reversing elements 
of order $2$) in many of the contexts mentioned previously. 

Next, we turn to the matter of density. 

\begin{definition}[Natural density]
Let $S$ be a subset of the positive integers $\N$. Define $d_x(S)$ by

$$d_x(S)=\frac{|S\cap\{1,\dots,x\}|}{x}$$

and, define the natural density $d(S)$ of $S$ to be $d(S)=\lim_{x\rightarrow\infty} d_x(S)$ when the limit exists.

\end{definition}

In some sense, $d(S)$ tells us the probability that a uniformly randomly chosen positive integer is an element of $S$, 
or what proportion of the natural numbers is contained in $S$. 
For example, $d(S) = 0$ when $S$ is finite, and we would expect that a random natural number 
is divisible by a given $k \in \N$ about $1/k$ of the time -- indeed
$$
d(k\mathbb{N})=\lim_{n \rightarrow\infty}\frac{|k\mathbb{N}\cap \{1,\dots,n\}|}{|\{1,\dots,n\}|} 
= \lim_{n\rightarrow\infty}\,\frac{\left\lfloor\frac{n}{k}\right\rfloor}{n} = \frac{1}{k}.
$$

In this paper, we answer the following: 

\begin{question*}
Let $\mathcal{Q}(r,s,t)$ be the set of orders of the finite quotients of the ordinary triangle group $\Delta^+(r,s,t)$. 
What is the natural density of $\mathcal{Q}(r,s,t)$? 
\end{question*}

It seems that the origin of this question was some work by Michael Larsen \cite{Larsen}, who was interested 
in the natural density of the orders of finite quotients of $\Delta^+(2,3,7)$ in order to deduce how 
often the Hurwitz bound (mentioned earlier) is attained. Larsen used the classification of finite simple groups 
and the notion of a `quotient dimension' of a linear algebraic group to prove that this density is zero, 
not only for the case $(r,s,t) = (2,3,7)$ but also for the six cases where $\frac{11}{12} < \frac{1}{r}+\frac{1}{s}+\frac{1}{t} < 1$. In fact, for these cases, Larsen proves the stronger fact that $|\mathcal{Q}(r,s,t)\cap \{1,\dots,x\}|\sim x^{1/q}$ where $q\in\{2,3\}$.

Independently (and some years later), May and Zimmerman \cite{MZ}, showed that the set of {\em odd\/}  orders of finite quotients of $\Delta^+(3,3,t)$ has density 0 when $t = 5, 7, 9$ and $11$. This was taken further in \cite{Tucker} by Tom Tucker, who recognised the importance of large cyclic normal subgroups in the quotients. By using a 1976 theorem of Bertram \cite{Bertram} on such groups (advised to him by my PhD supervisor Marston Conder), he proved that if any one of $r$, $s$ and $t$ is coprime to the other two, then $d(\mathcal{Q}(r,s,t))=0$, and asked how this might be extended.
In particular, we note that Tucker's application of Bertram's theorem showed that  $d(\mathcal{Q}(2,3,7))=0$ but did not require the classification of finite simple groups. 

By considering how and why Tucker's approach worked, it became clear that a modification of Bertram's theorem might help extend the answer to the above question to a much larger family of triples $(r,s,t)$, but this required a significant extension of the asymptotic number theory used in \cite{Bertram}. Posting a question \cite{MQ} on {\rm math}{\em overflow} (as suggested by my PhD co-supervisor Gabriel Verret) led me to a very helpful interpretation by Terrence Tao \cite{Tao} of the Tur{\' a}n-Kubilius Inequality (proved in general in the 1950s by Kubilius \cite{Kubilyus}, following a special case in the 1930s by Tur{\' a}n \cite{Turan} in the 1930s), and consequently to the main theorem of this paper: 

\begin{theorem}
\label{thm:main}
For every triple $(r,s,t)$ of positive integers, the natural density of the orders of the finite 
quotients of the ordinary triangle group $\Delta^+(r,s,t)$ is zero.
\end{theorem}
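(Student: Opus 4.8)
The plan is to split into the three geometric regimes. The spherical case ($\tfrac1r+\tfrac1s+\tfrac1t>1$) is immediate, since then $\Delta^+(r,s,t)$ is finite, so $\mathcal Q(r,s,t)$ is a finite set and has density $0$.

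For the Euclidean case, where $(r,s,t)$ is $(3,3,3)$, $(2,4,4)$ or $(2,3,6)$, I would exploit that $\Delta^+(r,s,t)$ is metabelian, being an extension of $\Z^2$ by $C_k$ with $k\in\{3,4,6\}$, whose action makes the derived subgroup $(\Delta^+)'\cong\Z^2$ a free rank-one module over the ring of integers $\mathcal{O}$ of $\Q(i)$ (if $k=4$) or $\Q(\sqrt{-3})$ (if $k\in\{3,6\}$). Then for any finite quotient $G$, the derived subgroup $G'$ is a quotient of $(\Delta^+)'$ by an $\mathcal{O}$-submodule, so $G'\cong\mathcal{O}/I$ for a nonzero ideal $I$ and $|G'|=N(I)$ is an ideal norm, while $[G:G']$ divides $|(\Delta^+)^{\mathrm{ab}}|\le 9$. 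Since the set of ideal norms of $\mathcal{O}$ --- equivalently, the positive integers in which every prime inert in $\mathcal{O}$ occurs to an even power --- has density $0$ (its counting function is $O(x/\sqrt{\log x})$, by Landau's theorem), and a bounded dilation of a density-zero set is again density-zero, this gives $d(\mathcal Q(r,s,t))=0$.

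The hyperbolic case is the substantial one, and here I would follow Tucker's approach \cite{Tucker}. For a finite quotient $G$ of order $n$, with $\bar x,\bar y,\bar z$ the generator images (of orders dividing $r,s,t$ and satisfying $\bar x\bar y\bar z=1$), killing $\bar x$ shows that $G/\langle\langle\bar x\rangle\rangle$ is cyclic of order dividing $\gcd(s,t)$, and symmetrically for $\bar y$ and $\bar z$. When one of $r,s,t$ is coprime to the other two, Tucker's use of Bertram's $1976$ theorem \cite{Bertram} on large cyclic normal subgroups already gives $d(\mathcal Q(r,s,t))=0$, with no appeal to the classification of finite simple groups; the new content is the case in which every parameter shares a factor with one of the others. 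The plan there is to establish a strengthened (``modified'') version of Bertram's theorem and to use it, together with the three cyclic subgroups $\langle\bar x\rangle,\langle\bar y\rangle,\langle\bar z\rangle$, their normal closures, and the bounded cyclic quotients above, to extract an arithmetic constraint on $n=|G|$: roughly, that $n$ must lie in a set obtained from the primes by a restricted multiplicative rule, a much less rigid analogue of the ``even powers of inert primes'' condition of the Euclidean case. The concluding step is to show that every such set has density $0$, and this is where I would invoke the Tur\'an--Kubilius inequality, in the flexible form emphasised by Tao \cite{Tao}, to control the relevant additive functions of $n$ and make the exceptional set negligible.

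The hardest part, I anticipate, is precisely this analytic step: the classical Tur\'an--Kubilius inequality in the form used by Bertram is not strong enough for the weaker constraints arising when no parameter is coprime to the other two, so a genuine extension of that number-theoretic input --- and a careful matching of it to the group-theoretic constraint --- is needed. A secondary difficulty is to state the modified Bertram theorem precisely and to verify that it governs \emph{every} finite quotient of a hyperbolic triangle group, including those with nonabelian simple composition factors, by elementary means --- the whole point of this route (unlike Larsen's \cite{Larsen}) being to bypass the classification of finite simple groups.
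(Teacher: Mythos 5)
Your spherical and Euclidean cases are fine (the Euclidean argument via ideal norms and Landau's theorem is a different and valid route from the paper, which instead quotes Coxeter--Moser for the quadratic-form shape of the quotient orders --- and in fact the paper's main argument ends up covering the Euclidean case anyway). But for the hyperbolic case, which is the entire substance of the theorem, your proposal is a research plan rather than a proof: you say you would ``establish a strengthened version of Bertram's theorem,'' ``extract an arithmetic constraint on $n$,'' and ``invoke Tur\'an--Kubilius to make the exceptional set negligible,'' without stating the constraint, the strengthened theorem, or the Tur\'an--Kubilius application. You acknowledge this yourself. The two missing ingredients are exactly the content of the paper.

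Concretely, the arithmetic constraint is: setting $m=rst$, the integer $n\le x$ is excluded if it has a prime factor $p$ with (i) $p>(\log x)^{1+\delta}$, (ii) $p^2\nmid n$, (iii) no divisor $d>1$ of $n$ satisfies $d\equiv 1\pmod p$, and (iv) $\gcd\bigl(\frac{p-1}{2},m\bigr)=1$. The group theory showing such $n\notin\mathcal Q(r,s,t)$ is elementary but not automatic: (ii) and (iii) force, via Sylow counting, a normal Sylow $p$-subgroup $P$ of order $p$; Schur--Zassenhaus gives $G=P\rtimes H$; since $G/C_G(P)$ embeds in $\Aut(P)\cong C_{p-1}$ and (iv) makes $r,s,t$ coprime to $\frac{p-1}{2}$, the generator images act on $P$ as $\pm 1$, and one then shows $p$ must divide one of $r,s,t$, contradicting $p>rst$. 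The analytic step is to apply Tur\'an--Kubilius to the additive function $f(n)$ counting prime factors of $n$ in $\mathcal P_x=\{p\in((\log x)^{1+\delta},x]:\gcd(\frac{p-1}{2},m)=1\}$; the whole argument hinges on showing $B(x)^2\ge\sum_{p\in\mathcal P_x}1/p\to\infty$, which needs the observation that all odd primes $\equiv 2m-1\pmod{2m}$ lie in $\mathcal P_x$ together with the strong form of Dirichlet's theorem ($\sum 1/p\sim\frac{1}{\phi(b)}\log\log x$ over a progression). Conditions (i)--(iii) are then handled by Bertram's counting lemmas, whose exceptional sets are $O\bigl(x\log x/(\log x)^{1+\delta}\bigr)=o(x)$. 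None of this appears in your proposal, so as it stands there is a genuine gap at the heart of the argument; the high-level strategy you describe does match the paper's, but the proof is in precisely the steps you defer.
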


Note that this theorem covers all three kinds of ordinary triangle groups. 
It is trivial for the spherical case (in which the groups are finite), and is also quite easy for the toroidal case, 
because up to isomorphism the groups are $\Delta^+(2,3,6)$, $\Delta^+(2,4,4)$ and $\Delta^+(3,3,3)$, 
with the third of those being isomorphic to a subgroup of index $2$ in the first, 
and by observations by Coxeter and Moser in \cite{CoxeterMoser}, smooth finite quotients of the first two 
all have orders of the form $6(b^2 + bc + c^2)$ and $4(b^2+c^2)$ respectively, and hence density zero.
The hyperbolic case is much more difficult, and accordingly, for the rest of this paper, every 
ordinary triangle group can be assumed to be hyperbolic.

Our proof essentially involves using the Tur{\' a}n-Kubilius Inequality and elements of Bertram's work 
in order to construct for each triple $(r,s,t)$ a set of positive integers with natural density $1$ none of which can be the order of a quotient of the ordinary triangle group $\Delta^+(r,s,t)$.

Also Theorem \ref{thm:main} has the following immediate consequence for the full triangle groups:

\begin{corollary}
\label{cor:main2}
For every triple $(r,s,t)$ of positive integers, the natural density of the orders of the finite 
quotients of the full triangle group $\Delta(r,s,t)$ is zero.
\end{corollary}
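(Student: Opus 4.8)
The plan is to derive Corollary \ref{cor:main2} from Theorem \ref{thm:main} together with the fact, recorded above, that $\Delta^+(r,s,t)$ sits inside $\Delta(r,s,t)$ as a subgroup of index $2$. Write $\mathcal{Q}^{\mathrm{full}}(r,s,t)$ for the set of orders of the finite quotients of $\Delta(r,s,t)$, and $\mathcal{Q}(r,s,t)$ for the corresponding set for $\Delta^+(r,s,t)$ as in the Question. The first step is to establish the containment
\[
\mathcal{Q}^{\mathrm{full}}(r,s,t)\ \subseteq\ \mathcal{Q}(r,s,t)\ \cup\ 2\,\mathcal{Q}(r,s,t).
\]
To see this, take any epimorphism $\varphi\colon \Delta(r,s,t)\twoheadrightarrow G$ with $G$ finite, and set $H=\varphi(\Delta^+(r,s,t))$. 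Restriction of $\varphi$ exhibits $H$ as a quotient of $\Delta^+(r,s,t)$, so $|H|\in\mathcal{Q}(r,s,t)$; and since $\Delta(r,s,t)$ is the union of two cosets of $\Delta^+(r,s,t)$, applying $\varphi$ shows that $G$ is a union of (at most) two cosets of $H$, whence $[G:H]\in\{1,2\}$. Therefore $|G|=[G:H]\,|H|\in\mathcal{Q}(r,s,t)\cup 2\,\mathcal{Q}(r,s,t)$, which proves the containment.

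The second step is purely about density. Theorem \ref{thm:main} gives $d(\mathcal{Q}(r,s,t))=0$. For the dilate, note that $2\,\mathcal{Q}(r,s,t)\cap\{1,\dots,x\}$ is in bijection with $\mathcal{Q}(r,s,t)\cap\{1,\dots,\lfloor x/2\rfloor\}$ via $n\mapsto 2n$, so $|2\,\mathcal{Q}(r,s,t)\cap\{1,\dots,x\}|\le|\mathcal{Q}(r,s,t)\cap\{1,\dots,x\}|$ and hence $d_x(2\,\mathcal{Q}(r,s,t))\le d_x(\mathcal{Q}(r,s,t))\to 0$. Combining these with subadditivity of $d_x$ over the union, $0\le d_x(\mathcal{Q}^{\mathrm{full}}(r,s,t))\le d_x(\mathcal{Q}(r,s,t))+d_x(2\,\mathcal{Q}(r,s,t))\le 2\,d_x(\mathcal{Q}(r,s,t))\to 0$, so the limit defining $d(\mathcal{Q}^{\mathrm{full}}(r,s,t))$ exists and equals $0$.

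I do not expect a serious obstacle here — the corollary genuinely is immediate — so the only points I would bother to justify with a line each are that the image of an index-$2$ subgroup under a homomorphism has index $1$ or $2$ in the image group (the coset-covering argument above), and the elementary observation that the property of having natural density $0$ is preserved under multiplication of a set by a fixed constant and under finite unions. One could even omit the notation $\mathcal{Q}^{\mathrm{full}}$ and compress the whole argument into two sentences, which is presumably what the phrase ``immediate consequence'' in the statement is meant to convey.
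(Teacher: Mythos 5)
Your proof is correct and is exactly the argument the paper intends: the paper offers no written proof, simply calling the corollary an ``immediate consequence'' of Theorem~\ref{thm:main} via the index-$2$ inclusion $\Delta^+(r,s,t)\le\Delta(r,s,t)$, which is precisely the containment $\mathcal{Q}^{\mathrm{full}}(r,s,t)\subseteq\mathcal{Q}(r,s,t)\cup 2\,\mathcal{Q}(r,s,t)$ and the density bookkeeping you carry out.
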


In section \ref{sec:Background}, we give some further helpful background (involving group theory
and number theory, including some observations by Bertram in \cite{Bertram}).  
In Section \ref{sec:TK} we present the Tur{\' a}n-Kubilius inequality, and give an application 
which will be important later in the paper. Then in section \ref{sec:ProofMainThm}, we prove our main theorem.

In a sequel to this paper, (by Marston Conder and Gabriel Verret and myself) it will be shown that will prove that the natural density of the  orders of finite quotients of the free product $C_r * C_s = \langle\, x,y \ | \ x^r = y^s = 1 \,\rangle$ 
is zero whenever at least one of $r$ and $r$ is odd, but positive whenever both $r$ and $s$ are even, 
and use this to prove that the natural density of the orders of finite edge-transitive $3$-valent graphs is zero, 
together with similar consequences for arc-transitive graphs of higher valency. 

\section{Further background}
\label{sec:Background}

First, we give some elementary properties of quotients of ordinary triangle groups, 
which will provide a taste of what comes later.

\begin{proposition}
\label{prop:threeprops}
${}$ \\[-20pt] 
\begin{enumerate}[{\rm (a)}]
\item Let $G$ be a finite quotient of $\Delta^+(r,s,t)$ of order $n$ with a cyclic normal Sylow $p$-subgroup $P$, 
such that $C_G(P)$ has index $1$ or $2$ in $G$. Then $p$ divides at least one of $r,s$ and $t$.  \\[-16pt] 
\item Let $n$ be a positive integer having a prime factor $p$ such that
{\rm (i)} $p$ is coprime to $n/p$, and {\rm (ii)} the only positive integer divisor of $n$ congruent to $1$ mod $p$ is $1$. 
Then if at least two of $r$, $s$ and $t$ are coprime to $\frac{p-1}{2}$ and each of $r$, $s$ and $t$ is coprime to $p$, then $n$ cannot be the order of a finite quotient of $\Delta^+(r,s,t)$.
\\[-16pt] 
\item Let $G$ be a finite group with a normal Sylow $p$-subgroup of order $p$, 
such that $G$ is generated by two non-trivial elements $u$ and $v$ having orders $k$ and $\ell$, both coprime to $p$. 
Then {\rm (i)} $\gcd(k,p\!-\!1) \ne 1$ and  $\gcd(\ell,p\!-\!1) \ne 1$, and {\rm (ii)} if $\gcd(k, p\!-\!1) = \gcd(\ell, p\!-\!1) = 2$, 
then $p$ divides the order of $uv$. 
\end{enumerate}
\end{proposition}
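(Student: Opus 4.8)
The plan is to prove part (c) first as a statement about finite groups, then to derive (a) by the same method (supplemented by a dihedral argument exploiting the index-$\le 2$ hypothesis), and finally to obtain (b) as a corollary of (a). Throughout I write $x,y,z$ for the images in a given quotient $G$ of the standard generators, so that $xyz=1$, $G=\langle x,y\rangle$, and $\operatorname{ord}(x)\mid r$, $\operatorname{ord}(y)\mid s$, $\operatorname{ord}(z)\mid t$.

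For (c), put $C = C_G(P)$. Since $P$ is a normal Sylow $p$-subgroup of order $p$, it is central in $C$ and is the Sylow $p$-subgroup of $C$, so $C = P\times K$ where $K$ is the Hall $p'$-subgroup of $C$ (Schur--Zassenhaus); being the unique such subgroup, $K$ is characteristic in $C$ and hence normal in $G$. Also $G/C$ embeds in $\Aut(P)\cong C_{p-1}$, so $G/C$ is cyclic. The key device $(\star)$ is: since $p\mid |G/K| = [G:C]\cdot p$, the group $G/K$ cannot be generated by the image of any element of order coprime to $p$. Now observe that an element $g\in G$ of order coprime to $p$ that lies in $C$ must lie in $K$ (its $P$-component is trivial). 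Hence if $\gcd(k,p-1)=1$, then $\bar u$ has order dividing $\gcd(k,p-1)=1$, so $u\in C$, so $u\in K$, so $G/K$ is generated by the image of $v$ — contradicting $(\star)$; thus $\gcd(k,p-1)\ne 1$, and symmetrically $\gcd(\ell,p-1)\ne 1$, which is (i). The same argument shows $\bar u\ne 1\ne\bar v$ in $G/C$. For (ii), the hypothesis now makes $\bar u$ and $\bar v$ both of order exactly $2$ in the cyclic group $G/C$, so $\bar u=\bar v$ and therefore $\overline{uv}=\bar u^2=1$, i.e. $uv\in C=P\times K$. If $uv\in K$, then $v$ and $u^{-1}$ have the same image in $G/K$, so $G/K$ is generated by the image of $u$ — again contradicting $(\star)$; hence the $P$-component of $uv$ is a nontrivial element of $C_p$, and $p$ divides $|uv|$.

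For (a), suppose for contradiction that $p$ divides none of $r,s,t$, so $x,y,z$ all have order coprime to $p$; let $K$ be the Hall $p'$-subgroup of $C_G(P)$, normal in $G$ as above. The index $[G:C_G(P)]$ is coprime to $p$, since $P$ is a Sylow $p$-subgroup of $G$ contained in $C_G(P)$, so by hypothesis it is $1$ or $2$. If it is $1$, then $P$ is central and $G=P\times K$; but $x,y$ have order coprime to $p$, so $x,y\in K$ and $G=\langle x,y\rangle\le K$, a contradiction. If it is $2$, then $p$ is odd, $G/C_G(P)$ acts on $P$ by inversion (the unique involution of the cyclic group $\Aut(P)$), and $G/K$, being a split extension (Schur--Zassenhaus) of $C_2$ by $P$ with this action, is the dihedral group $D$ of order $2|P|\ge 6$. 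In $D$ the only elements of order coprime to $p$ are the identity and the reflections, so the images $\tilde x,\tilde y,\tilde z$ lie among these; since $\tilde x\tilde y\tilde z=1$, a short case analysis (according to whether $\tilde x=1$, $\tilde y=1$, or both are reflections — in which case $\tilde x\tilde y$ is a rotation forced to be trivial) shows $D$ is generated by a single element of order $\le 2$, which is impossible. Hence $p$ divides one of $r,s,t$.

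For (b), hypothesis (i) makes the Sylow $p$-subgroup $P$ cyclic of order $p$, and hypothesis (ii) together with Sylow's theorem ($n_p\equiv 1\bmod p$ and $n_p\mid n$) forces $n_p=1$, so $P$ is a cyclic normal Sylow $p$-subgroup of $G$. Among $x,y,z$ (any two of which generate $G$, because $xyz=1$), choose the two whose orders divide the two of $r,s,t$ that are coprime to $(p-1)/2$; any divisor of $p-1$ coprime to $(p-1)/2$ is $1$ or $2$, so the images of these two generators in $G/C_G(P)\hookrightarrow\Aut(C_p)\cong C_{p-1}$ have order at most $2$, whence $[G:C_G(P)]\le 2$. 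Part (a) then gives $p\mid r$, $p\mid s$ or $p\mid t$, contradicting that each of $r,s,t$ is coprime to $p$; so no quotient of order $n$ exists. I expect the main obstacle to be part (c)(ii) — pinning the orders of $\bar u$ and $\bar v$ down to exactly $2$, placing $uv$ inside $P\times K$, and then excluding $uv\in K$ — with the normality in $G$ of the Hall $p'$-subgroup $K$ of $C_G(P)$ and the repeated use of $(\star)$ carrying most of the weight.
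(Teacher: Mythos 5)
Your proposal is correct and follows essentially the same route as the paper: Schur--Zassenhaus/Hall decomposition of $C_G(P)=P\times K$ with $K$ characteristic in $C_G(P)$ and hence normal in $G$, the embedding $G/C_G(P)\hookrightarrow\Aut(P)\cong C_{p-1}$, and reduction to a dihedral quotient $G/K\cong P\rtimes C_2$. The only differences are organizational (you prove (c) first and make the ``cannot be generated by a single $p'$-element'' device explicit), and your case analysis in the dihedral step is if anything more complete than the paper's.
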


\begin{proof}
For part (a), the Schur-Zassenhaus Theorem tells us that $G$ is a semi-direct product $P \rtimes H$ 
of the normal subgroup $P$ by some complementary $p'$-subgroup $H$.  
If $C_G(P)$ has index $1$ in $G$, then $P$ is central in $G$, so $G$ is a direct product of $H$ and $P$, 
which implies that $P$ is a cyclic $p$-quotient of $G$ and hence of $\Delta^+(r,s,t)$, 
and so $p$ must divide at least two of $r$, $s$ and $t$.
On the other hand, if $C_G(P)$ has index $2$ in $G$, then $C_G(P)$ is a direct product of $C_H(P)$ and $P$, 
with $C_H(P)$ being characteristic in $C_G(P)$ and hence normal in $G$, so the quotient $G/C_H(P)$ is 
isomorphic to a semi-direct product $P \rtimes C_2$, which implies that $p$ divides at least one of $r$, $s$ and $t$ 
(since $P \rtimes C_2$ cannot be generated by two elements of order~$2$ without being dihedral of order $2|P|$). 

For part (b), assume the contrary, and let $G$ be a finite quotient of $\Delta^+(r,s,t)$ of order $n$. 
Then, by Sylow theory, conditions (i) and (ii) imply that $G$ has a cyclic normal Sylow $p$-subgroup $P$ of order $p$, 
and hence that $G$ is isomorphic to a semi-direct product $P \rtimes H$ for some $p'$-subgroup $H$. 
Also $G/C_G(P)$ is isomorphic to a subgroup of $\Aut(P) \cong C_{p-1}$, and so the images in $G$ of the 
three generators $x,y$ and $z$ of $\Delta^+(r,s,t)$ either centralise $P$ or invert each element of $P$ by conjugation, 
and therefore $C_G(P)$ has index $1$ or $2$ in $G$.  
But then part (a) implies that $p$ divides at least one of $r$, $s$ and $t$, a contradiction.

Finally, part (c) can be proved using aspects of the arguments in parts (a) and (b). 
The given group $G$ is a semi-direct product $P \rtimes H$ of the cyclic normal Sylow $p$-subgroup $P$ 
by some complementary $p'$-subgroup $H$, with $N = C_H(P)$ being normal in $G$. 
Letting $\overline{J} = JN/N$ for every subgroup $J$ of $G$, 
we see that $\overline{G} \cong \overline{P} \rtimes \overline{H}$, with $\overline{H} = H/C_H(P)$ isomorphic to 
a subgroup of $\Aut(P) \cong C_{p-1}$.
Next, since $G$ is generated by a pair of elements with orders coprime to $p$, so is $\overline{G}$ 
and hence so is ${\overline H}$, and therefore ${\overline H}$ is isomorphic to a non-trivial cyclic subgroup of $\Aut(P)$, 
It follows that the images ${\overline u}$ and ${\overline v}$ of $u$ and $v$ in $\overline{G}$ are non-trivial, 
and so $\gcd(k,p\!-\!1) \ne 1$ and  $\gcd(\ell,p\!-\!1) \ne 1$, proving (i). 
Finally, if $\gcd(k,p\!-\!1) = 2$ and  $\gcd(\ell,p\!-\!1) = 2$, then ${\overline u}$ and ${\overline v}$ have order $2$, 
so $\overline{H} \cong C_2$ and therefore $\overline{G}$ is dihedral of order $2p$, 
which implies that $p$ divides the order of ${\overline u}{\overline v}$ and hence the order of $uv$. 
\end{proof}

Now we turn to some number theory that will be helpful:

\begin{definition}
Let $A$ be any set of positive integers. Then we call
$$
\sum _{n \in A}\frac{1}{n}
$$
the {\em logarithmic size\/} of $A$, and denote it by $\ell(A)$. 
If the given sum diverges, we say that $A$ has infinite logarithmic size.
\end{definition}

A well-known theorem of Dirichlet states whenever $a$ and $b$ are two coprime positive integers, 
then the arithmetic sequence $(a+bn)_{n \in \N}$ contains infinitely many primes. 
Dirichlet also proved the following stronger theorem, which can be found in \cite{Narkiewicz}: 

\begin{theorem}[Dirichlet's strong theorem]
\label{thm:StrongDirichlet}
For any given pair of coprime positive integers $a$ and $b$, the logarithmic size of the set of 
primes congruent to $a$ modulo $b$ is infinite. 
In other words, if $\mathcal{V}_x$ is the set of all primes $p\le x$ such that $p\equiv a$ mod $b$, 
then
$$
\sum_{p\in\mathcal{V}_x}\frac{1}{p}\rightarrow \infty
$$
as $x\rightarrow\infty$,  
and the logarithmic size of $\mathcal{V}_x$ is given by 
$$
\ell(\mathcal{V}_x) = \left(\frac{1}{\phi(b)}+o(1)\right)\log\log x.
$$
\end{theorem}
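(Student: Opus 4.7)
The plan is to prove the precise asymptotic $\ell(\mathcal{V}_x) = (1/\phi(b) + o(1))\log\log x$ via the classical analytic machinery of Dirichlet characters and $L$-series; the divergence assertion is then an immediate consequence. Let $\chi$ range over the $\phi(b)$ Dirichlet characters modulo $b$, and for $s>1$ define
$$L(s,\chi) \;=\; \sum_{n\ge 1}\frac{\chi(n)}{n^s} \;=\; \prod_p \Bigl(1-\frac{\chi(p)}{p^s}\Bigr)^{\!-1}.$$
Taking logarithms of the Euler product and isolating the $k=1$ terms yields
$$\log L(s,\chi) \;=\; \sum_p \frac{\chi(p)}{p^s} \,+\, R(s,\chi),$$
where $R(s,\chi)$ collects the higher prime-power contributions and is uniformly bounded for $s>1$ (being dominated by $\sum_p 1/(p(p-1))$).

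Next I would invoke character orthogonality: $\frac{1}{\phi(b)}\sum_\chi \overline{\chi(a)}\chi(n) = 1$ when $n \equiv a \pmod{b}$ and $\gcd(n,b)=1$, and $0$ otherwise. Taking $n=p$ and summing the previous identity against $\overline{\chi(a)}$ produces
$$\sum_{p\equiv a\,(\mathrm{mod}\,b)}\frac{1}{p^s} \;=\; \frac{1}{\phi(b)}\sum_\chi \overline{\chi(a)}\log L(s,\chi) \,+\, O(1).$$
For the principal character, $L(s,\chi_0) = \zeta(s)\prod_{p\mid b}(1-p^{-s})$ inherits a simple pole at $s=1$ from $\zeta$, so $\log L(s,\chi_0) = \log\frac{1}{s-1}+O(1)$ as $s\to 1^+$. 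For each non-principal $\chi$ the decisive input is that $L(1,\chi)$ is finite and nonzero, so $\log L(s,\chi) = O(1)$. Substituting into the character sum gives
$$\sum_{p\equiv a\,(\mathrm{mod}\,b)}\frac{1}{p^s} \;=\; \frac{1}{\phi(b)}\log\frac{1}{s-1} \,+\, O(1).$$

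To convert this analytic statement into the partial-sum asymptotic claimed in the theorem, I would apply partial summation to each $\sum_p \chi(p)/p^s$ individually and recombine via orthogonality, anchoring the principal-character term with Mertens's estimate $\sum_{p\le x} 1/p = \log\log x + O(1)$. The genuinely hard part of the whole argument is the non-vanishing $L(1,\chi)\ne 0$ for non-principal $\chi$. For complex $\chi$ one can use the standard positivity trick: the Dirichlet series for $\log\prod_\chi L(s,\chi)$ has non-negative coefficients after pairing $\chi$ with $\bar\chi$, so if some $L(1,\chi)$ vanished the product would extend regularly to $s=1$, contradicting the simple pole contributed by $L(s,\chi_0)$. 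The real non-principal characters are the delicate case, and for those one would invoke Dirichlet's class number formula to identify $L(1,\chi)$ with a strictly positive arithmetic quantity. Everything after the non-vanishing is routine analytic bookkeeping.
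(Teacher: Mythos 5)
The paper does not prove this statement at all: it is quoted as a classical result and delegated to Narkiewicz's \emph{Number Theory}, so there is no in-paper argument to compare against. Your outline is essentially the standard textbook proof (Dirichlet characters, Euler products, orthogonality, non-vanishing of $L(1,\chi)$, Mertens), i.e.\ exactly the argument the citation points to, and it is correct in structure.

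One step is glossed over in a way worth flagging. You derive
$$\sum_{p\equiv a\ (\mathrm{mod}\ b)}\frac{1}{p^{s}}=\frac{1}{\phi(b)}\log\frac{1}{s-1}+O(1)\qquad(s\to 1^{+}),$$
which is an \emph{Abelian} statement about the Dirichlet series near $s=1$, and then assert that ``partial summation'' converts it into the partial-sum asymptotic $\sum_{p\le x,\,p\equiv a}1/p=(1/\phi(b)+o(1))\log\log x$. That direction is Tauberian, not Abelian: boundedness of $\sum_{p}\chi(p)p^{-s}$ as $s\to1^{+}$ does not by itself give boundedness of the partial sums $\sum_{p\le x}\chi(p)/p$. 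The standard repair is to prove directly, for each non-principal $\chi$, that $\sum_{n\le x}\chi(n)\Lambda(n)/n=O(1)$ (this is where $L(1,\chi)\ne0$ actually enters, via the hyperbola/Dirichlet-convolution argument or via $-L'/L$), then deduce $\sum_{p\le x}\chi(p)(\log p)/p=O(1)$ and finally, by genuine partial summation against $1/\log t$, that $\sum_{p\le x}\chi(p)/p$ converges. Combining that with Mertens for the principal character and orthogonality gives the stated asymptotic. With that substitution your sketch becomes the complete classical proof; the non-vanishing of $L(1,\chi)$ for real $\chi$ is, as you say, the only genuinely deep input.
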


\begin{corollary}
\label{SDC}
Let $\mathcal{V}_x$ be as defined in Theorem {\em\ref{thm:StrongDirichlet}}, 
and let $\delta$ be any real number such that $0 < \delta < 1$.  
Then the logarithmic size of the set $\mathcal{V}'_x=\{ p \in \mathcal{V}_x \mid (\log x)^{1+\delta} < p \le x\}$
converges to infinity.  
Equivalently, 
$$
\sum_{p\in\mathcal{V}_x'}\frac{1}{p}\rightarrow \infty
    \quad \hbox{ as } \ x\rightarrow\infty.
$$
\end{corollary}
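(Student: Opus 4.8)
The plan is to obtain $\mathcal{V}'_x$ from $\mathcal{V}_x$ by discarding the primes $p \le (\log x)^{1+\delta}$, and to show that the discarded reciprocals contribute only an amount negligible compared with $\log\log x$. Note first that $(\log x)^{1+\delta} < x$ for all sufficiently large $x$, so $\mathcal{V}'_x$ is genuinely a subset of $\mathcal{V}_x$ as $x\to\infty$, and
$$
\sum_{p \in \mathcal{V}'_x} \frac{1}{p} \;=\; \sum_{p \in \mathcal{V}_x} \frac{1}{p} \;-\; \sum_{\substack{p \in \mathcal{V}_x \\ p \le (\log x)^{1+\delta}}} \frac{1}{p} \;\ge\; \ell(\mathcal{V}_x) \;-\; \sum_{p \le (\log x)^{1+\delta}} \frac{1}{p},
$$
where in the last sum the congruence condition has simply been dropped, which can only enlarge it.

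Next I would estimate that last sum using Mertens' second theorem, $\sum_{p \le y} 1/p = \log\log y + O(1)$. Taking $y = (\log x)^{1+\delta}$ gives
$$
\sum_{p \le (\log x)^{1+\delta}} \frac{1}{p} \;=\; \log\log\!\left( (\log x)^{1+\delta} \right) + O(1) \;=\; \log\!\left( (1+\delta)\log\log x \right) + O(1) \;=\; \log\log\log x + O(1),
$$
since $\log(1+\delta)$ is a constant. In particular this quantity is $O(\log\log\log x) = o(\log\log x)$.

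Finally, combining the two displays with Theorem \ref{thm:StrongDirichlet}, which gives $\ell(\mathcal{V}_x) = \left( \frac{1}{\phi(b)} + o(1) \right)\log\log x$, I conclude
$$
\sum_{p \in \mathcal{V}'_x} \frac{1}{p} \;\ge\; \left( \frac{1}{\phi(b)} + o(1) \right)\log\log x \;-\; \left( \log\log\log x + O(1) \right) \;\longrightarrow\; \infty
$$
as $x\to\infty$, because $1/\phi(b) > 0$ and $\log\log\log x = o(\log\log x)$. I do not expect a genuine obstacle here; the only point needing care is the iterated-logarithm bookkeeping — recognising that truncating at a threshold as large as $(\log x)^{1+\delta}$ still removes only an $O(\log\log\log x)$ amount of logarithmic size, which is swamped by the $\Theta(\log\log x)$ total. (Mertens' estimate may be weakened here to the cruder bound $\sum_{p\le y}1/p = O(\log\log y)$, which follows from Chebyshev's $\pi(y) = O(y/\log y)$ by partial summation, without affecting the conclusion.)
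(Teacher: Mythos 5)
Your proof is correct and follows essentially the same route as the paper: both decompose $\ell(\mathcal{V}'_x)$ by removing the primes up to $(\log x)^{1+\delta}$ and observe that the removed logarithmic size is only of order $\log\log\log x$, which is swamped by the $\bigl(\tfrac{1}{\phi(b)}+o(1)\bigr)\log\log x$ total. The only cosmetic difference is that you bound the discarded sum by dropping the congruence condition and invoking Mertens, whereas the paper applies Dirichlet's strong theorem a second time at the threshold $(\log x)^{1+\delta}$; both give the same conclusion.
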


\begin{proof}
${}$\\[-33pt] 
\begin{align*}
   \ell(\mathcal{V}'_x) 
   &= \ell(\mathcal{V}_x\cap ((\log x)^{1+\delta},x]) \\[+3pt]
   &= \ell(\mathcal{V}_x) - \ell(\mathcal{V}_{(\log x)^{1+\delta}}) \\
   &= \left(\frac{1}{\phi(b)}+o(1)\right)\log(\log x) - \left(\frac{1}{\phi(b)}+o(1)\right)\log(\log((\log x)^{1+\delta})) \\
   &=  \left(\frac{1}{\phi(b)}+o(1)\right)\log\left(\frac{\log x}{\log(\log x)^{1+\delta}}\right), 
       \,\hbox{ which converges to infinity.} 
\end{align*}
${}$ \\[-32pt] 
\end{proof}

\medskip
Next, we outline some key constituents of Bertram's approach, 
namely three observations made as Lemmas \ref{lem:B1} to \ref{lem:B3} in \cite{Bertram}, 
with each of $f$, $g$ and $h$ being a function from $\R^+$ to $\R^{+}\!.$ 
One of these needs the following:

\begin{definition}
Let $x\in\N$ and let ${\cal P}(x)$ be some number-theoretic property possibly dependent on $x$. Then let $S_x$ be the set of all $n \le x$ in $\N$ such that $n$ satisfies ${\cal P}(x)$. Then we say that {\em almost all positive integers $\le x$ satisfy ${\cal P}(x)$\/} if $\,\lim_{x\rightarrow \infty}  d_x(S_x) = 1$ or equivalently if $\lim_{x\rightarrow\infty}\frac{|S_x|}{x}=1$.
\end{definition}

For example, if $S_x$ is the set of positive integers less than $x$ but greater than $\log x$, we would say that almost all positive integers $\le x$ are greater than $\log x$ because $\lim_{x\rightarrow\infty}\frac{|S_x|}{x}=1$.
\begin{observation*}
    Note that if almost all positive integers $\le x$ satisfy ${\cal P}(x)$, then the set of positive integers satisfying ${\cal P}(x)$ for at least one $x$ has natural density $1$.

    $$
    d_x\left(\bigcup_{y\in\N}S_y\right) = \frac{|\bigcup_{y\in\N}S_y \cap \{1,\dots,x\}|}{x} \ge \frac{|S_x\cap\{1,\dots,x\}|}{x} = \frac{|S_x|}{x}\rightarrow 1.
    $$

\end{observation*}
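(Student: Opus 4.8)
The plan is to bound the initial-segment counting function of the union $T=\bigcup_{y\in\N}S_y$ from below by that of a single, well-chosen term — namely $S_x$ itself, when counting up to $x$ — and then let $x\to\infty$.

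First I would fix notation and unpack the hypotheses. By definition each $S_y$ is the set of $n\le y$ in $\N$ satisfying ${\cal P}(y)$, so in particular $S_y\subseteq\{1,\dots,y\}$; and the assumption that almost all positive integers $\le x$ satisfy ${\cal P}(x)$ means precisely that $|S_x|/x\to 1$ as $x\to\infty$. Since the set named in the conclusion (the positive integers satisfying ${\cal P}(x)$ for at least one $x$) contains $T$, it will suffice to prove $d(T)=1$, because density cannot decrease on supersets and is at most $1$.

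The one substantive step will be the inclusion $S_x\subseteq T\cap\{1,\dots,x\}$, valid for every $x\in\N$: indeed $S_x\subseteq T$ since $S_x$ is among the sets being unioned, and $S_x\subseteq\{1,\dots,x\}$ by construction, so the two containments combine. Taking cardinalities and dividing by $x$ then gives $d_x(T)=|T\cap\{1,\dots,x\}|/x\ge|S_x|/x$ for every $x$. Since also $d_x(T)\le 1$ trivially, letting $x\to\infty$ and using $|S_x|/x\to 1$ will force $d_x(T)\to 1$; hence $\lim_{x\to\infty}d_x(T)$ exists and equals $1$, so $d(T)=1$, and the conclusion follows.

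I do not anticipate any real obstacle: this is a soft packaging lemma rather than a computation. The only points I would be careful about are (i) that $S_x\subseteq\{1,\dots,x\}$, so that intersecting $T$ with the initial segment $\{1,\dots,x\}$ discards none of $S_x$, and (ii) the crude upper bound $d_x(T)\le 1$, which is what lets one conclude that the genuine limit — not merely the lower limit — is $1$, so that the natural density is in fact defined and equal to $1$.
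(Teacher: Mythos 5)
Your argument is exactly the paper's: the containment $S_x\subseteq\bigcup_{y}S_y\cap\{1,\dots,x\}$ gives $d_x\bigl(\bigcup_y S_y\bigr)\ge |S_x|/x\to 1$, and the trivial bound $d_x\le 1$ closes the squeeze. Correct, and the same route as the paper (which leaves the $d_x\le 1$ step implicit).
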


\begin{lemma}
\label{lem:B1}
For all $x \in \R^+,$ the number of positive integers $n\le x$ such that $p^2$ divides $n$ for some prime $p>f(x)$ 
is less than $\frac{x}{f(x)}$.
\end{lemma}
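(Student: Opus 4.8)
Lemma \ref{lem:B1} is a routine counting bound; the proof is a standard union bound / Eratosthenes-type estimate. Here is the plan.

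\textbf{Plan.} Fix $x \in \R^+$. I want to count the positive integers $n \le x$ that are divisible by $p^2$ for at least one prime $p > f(x)$. The natural move is to bound this count by a union bound over the relevant primes: the set of such $n$ is contained in $\bigcup_{p > f(x)} \{\, n \le x : p^2 \mid n \,\}$, so its cardinality is at most $\sum_{p > f(x)} |\{ n \le x : p^2 \mid n\}| = \sum_{p > f(x)} \lfloor x/p^2 \rfloor \le \sum_{p > f(x)} x/p^2$. The sum ranges only over primes $p$ with $p^2 \le x$ (otherwise the floor is $0$), but I can afford to be wasteful and bound it by summing over \emph{all} integers $m > f(x)$, not just primes: $\sum_{p > f(x)} x/p^2 \le x \sum_{m > f(x)} 1/m^2$.

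\textbf{The key estimate.} Now I need $\sum_{m > f(x)} 1/m^2 < 1/f(x)$. This follows from comparing the tail of $\sum 1/m^2$ with an integral (or a telescoping sum): $\sum_{m \ge M+1} \frac{1}{m^2} < \sum_{m \ge M+1} \frac{1}{m(m-1)} = \sum_{m \ge M+1}\left(\frac{1}{m-1} - \frac{1}{m}\right) = \frac{1}{M}$, and applying this with $M = \lceil f(x) \rceil \ge f(x)$ gives $\sum_{m > f(x)} 1/m^2 \le 1/\lceil f(x)\rceil \le 1/f(x)$. Multiplying through by $x$ yields the claimed bound of $x/f(x)$. (One should note the edge case $f(x) < 1$, where the bound $x/f(x) > x$ is trivially true since there are only $\lfloor x \rfloor \le x$ integers in range at all; so the inequality holds vacuously there and no care is needed.)

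\textbf{Main obstacle.} There is essentially no obstacle — this is a warm-up lemma. The only thing to be slightly careful about is making the chain of inequalities genuinely give the strict inequality "$<$" rather than "$\le$"; but since $\lfloor x/p^2 \rfloor \le x/p^2$ with equality only in degenerate situations, and the telescoping bound on the tail of $\sum 1/m^2$ is already strict, strictness is easily arranged (or one can simply note that if there are no such $n$ at all the statement is trivial, and otherwise the wasteful over-counting makes the inequality strict). I would present the union bound, the reduction from primes to all integers, and the telescoping tail estimate, in that order, in about four lines.
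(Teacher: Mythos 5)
The paper does not prove this lemma itself; it defers to Bertram \cite{Bertram}, whose argument is exactly the union-bound-plus-telescoping-tail estimate you describe, so your overall route is the intended one. However, there is a genuine (if small) slip in your ``key estimate.'' The integers $m$ with $m>f(x)$ begin at $\lfloor f(x)\rfloor+1$, which for non-integer $f(x)$ equals $\lceil f(x)\rceil$, not $\lceil f(x)\rceil+1$. So applying your telescoping bound $\sum_{m\ge M+1}m^{-2}<1/M$ to the tail actually yields
$$\sum_{m>f(x)}\frac{1}{m^2}\;=\;\sum_{m\ge\lfloor f(x)\rfloor+1}\frac{1}{m^2}\;<\;\frac{1}{\lfloor f(x)\rfloor},$$
and $1/\lfloor f(x)\rfloor\ge 1/f(x)$ -- the wrong side of the target. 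The inequality you assert, $\sum_{m>f(x)}1/m^2\le 1/\lceil f(x)\rceil$, is in fact false in general: for $f(x)=2.9$ the left side is $\sum_{m\ge 3}1/m^2=\pi^2/6-5/4\approx 0.395$, while the right side is $1/3$. In other words, once you have discarded primality and passed to all integers, the literal constant $x/f(x)$ is no longer recoverable by the telescoping trick alone; you only get $x/\lfloor f(x)\rfloor$.

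This does not matter for anything in the paper: the lemma is applied with $f(x)=(\log x)^{1+\delta}\to\infty$, where only the order of magnitude is used, and $x/\lfloor f(x)\rfloor$ serves just as well (indeed one may as well assume $f(x)\ge 2$, since for $f(x)<1$ the bound is vacuous as you note, and for $1\le f(x)<2$ one checks the squarefull count directly). But as a proof of the statement as literally written, the step fails, and you should either state the conclusion with $\lfloor f(x)\rfloor$ in place of $f(x)$, or retain the restriction to primes and argue slightly more carefully to recover the stated constant.
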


\begin{lemma}
\label{lem:B2}
The number of positive integers $n\le x$ having a prime factor $p>f(x)$, and simultaneously having a divisor $d>1$ satisfying $d\equiv 1$ mod $p$, is less than $\frac{x((\log x) +1)}{f(x)}$.
\end{lemma}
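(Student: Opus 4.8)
The plan is to count, for each relevant prime $p$, the positive integers $n \le x$ that are "bad for $p$" — meaning $p \mid n$, $p > f(x)$, and $n$ has a divisor $d > 1$ with $d \equiv 1 \pmod p$ — and then sum over $p$. So fix a prime $p > f(x)$. If $n \le x$ is bad for $p$, write $n = p^a m$ with $p \nmid m$; since $p \mid n$ we have $a \ge 1$, so $p \le n \le x$, which already bounds the range of $p$. The divisor $d > 1$ with $d \equiv 1 \pmod p$ cannot be a power of $p$ (powers of $p$ are $\equiv 0$), so $d$ must have a nontrivial factor coprime to $p$; in particular $d \ge p+1$, and $d$ divides $m$ (after stripping the $p$-part, using $\gcd(d,p)=1$), so $m$ has a divisor $\equiv 1 \pmod p$ that is at least $p+1$. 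The cleanest route: $n$ is divisible by $p \cdot d$ where $d > 1$, $d \equiv 1 \pmod p$, $\gcd(d,p) = 1$, so $n$ is a multiple of $pd$ and hence the number of such $n \le x$ is at most $\sum_{d} \lfloor x/(pd) \rfloor \le \frac{x}{p}\sum_{d} \frac1d$, where $d$ ranges over integers $\equiv 1 \pmod p$ in $(1, x]$, i.e. $d \in \{p+1, 2p+1, \dots\}$ with $d \le x$.

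Next I would bound that inner sum. The values $d = kp+1$ for $k = 1, 2, \dots, \lfloor x/p\rfloor$ (roughly) satisfy $d \ge kp$, so $\sum_d \frac1d \le \sum_{k\ge 1}^{\,\le x/p} \frac{1}{kp} \le \frac1p(\log(x/p) + 1) \le \frac{\log x + 1}{p}$. Therefore the number of $n \le x$ that are bad for this particular $p$ is at most $\frac{x}{p}\cdot\frac{\log x + 1}{p} = \frac{x(\log x + 1)}{p^2}$. Now sum over all primes $p > f(x)$ (with $p \le x$, though we don't even need that): the total count of $n \le x$ bad for some such $p$ is at most $x(\log x + 1)\sum_{p > f(x)} \frac{1}{p^2} < x(\log x+1)\sum_{m > f(x)} \frac{1}{m^2} < \frac{x(\log x + 1)}{f(x)}$, using the standard tail estimate $\sum_{m > f(x)} m^{-2} < \int_{f(x)-1}^\infty u^{-2}\,du$ — or more cleanly $\sum_{m \ge M} \frac1{m^2} \le \frac1{M-1}$ and then $\frac{1}{f(x)-1}$, absorbing constants, or one sharpens the earlier step slightly so that $\sum_{p>f(x)} p^{-2} \le 1/f(x)$ outright via $\sum_{m > N} m^{-2} \le \int_N^\infty u^{-2}du = 1/N$. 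This gives the claimed bound $\frac{x((\log x)+1)}{f(x)}$.

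The only real subtlety — the step I would be most careful about — is the claim that a divisor $d > 1$ of $n$ with $d \equiv 1 \pmod p$ forces $n$ to be divisible by $pd$ with $\gcd(d, p) = 1$. This needs: (i) $d \equiv 1 \pmod p$ implies $\gcd(d, p) = 1$ (immediate, since $p \nmid d$); (ii) $d \mid n$ and $p \mid n$ with $\gcd(d,p) = 1$ implies $pd \mid n$ (from coprimality); and (iii) $d \ne 1$ together with $d \equiv 1 \pmod p$ forces $d \ge p + 1$, so $d$ genuinely ranges over $\{p+1, 2p+1, \dots\}$ rather than including small values. All three are elementary, but they are exactly what makes the double-counting bound by $pd$ legitimate and keeps the arithmetic-progression sum over $d$ starting at $p+1$ (which is what produces the extra factor $\frac1p$ that turns $\frac1p\sum 1/d$ into an $\frac{1}{p^2}$-summable series over $p$). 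Everything else is routine estimation of harmonic-type sums and prime tails.
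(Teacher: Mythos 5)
Your argument is correct and is essentially the standard proof of this lemma: the paper does not reprove it but cites Bertram \cite{Bertram}, whose argument is exactly your double count over pairs $(p,\,d=kp+1)$ with $pd\mid n$, yielding $\frac{x(\log x+1)}{p^2}$ for each prime and then summing $p^{-2}$ over $p>f(x)$. The only loose point is the final tail estimate: $\sum_{m>N}m^{-2}\le\int_N^{\infty}u^{-2}\,du=1/N$ is false for general real $N$ (e.g.\ $N=2.9$ gives $\sum_{m\ge 3}m^{-2}\approx 0.395>1/2.9$), so to obtain the stated constant $1/f(x)$ exactly one should exploit that the sum runs over primes only (for instance via a telescoping bound over odd integers), or settle for $1/(f(x)-1)$, which is harmless for the way the lemma is used in Section \ref{sec:ProofMainThm}.
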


\begin{lemma}
\label{lem:B3}
The number of positive integers $n\le x$ which have a divisor $d\ge h(x)$, such that each prime factor of $d$ is less than $g(x)$, is less than $\frac{x(\log(g(x))+c)}{\log(h(x))}$ for some constant $c$.
\end{lemma}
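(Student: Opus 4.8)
The plan is to run a first-moment argument on the logarithm of the ``$g(x)$-part'' of $n$. For $n\in\N$, let $d_0=d_0(n)=\prod_{p<g(x)}p^{v_p(n)}$ be the largest divisor of $n$ all of whose prime factors are less than $g(x)$, and set $L(n)=\log d_0=\sum_{p<g(x)}v_p(n)\log p$, where $v_p$ denotes the $p$-adic valuation. The key observation is that if $n$ has \emph{any} divisor $d\ge h(x)$ all of whose prime factors are less than $g(x)$, then $d\mid d_0$, whence $d_0\ge h(x)$ and so $L(n)\ge\log h(x)$. Thus every integer we must count lies in $\{\,n\le x:L(n)\ge\log h(x)\,\}$, and --- assuming, as we may, that $h(x)>1$ (the statement being vacuous otherwise) --- Markov's inequality (valid since $L(n)\ge 0$) bounds the cardinality of this set, and hence the quantity in the lemma, by $\frac{1}{\log h(x)}\sum_{n\le x}L(n)$.

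First I would evaluate $\sum_{n\le x}L(n)$ by interchanging the order of summation. Since $v_p(n)=\#\{k\ge 1:p^k\mid n\}$, one has $\sum_{n\le x}L(n)=\sum_{p<g(x)}\log p\sum_{k\ge 1}\lfloor x/p^k\rfloor$, and dropping the floors gives $\sum_{k\ge1}\lfloor x/p^k\rfloor<x\sum_{k\ge1}p^{-k}=\frac{x}{p-1}$; therefore $\sum_{n\le x}L(n)<x\sum_{p<g(x)}\frac{\log p}{p-1}$ (strictly, as soon as $g(x)\ge2$, so that the sum over $p$ is non-empty).

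It then remains to bound $\sum_{p<g(x)}\frac{\log p}{p-1}$ by $\log g(x)+c$ for an absolute constant $c$. Splitting $\frac{1}{p-1}=\frac1p+\frac{1}{p(p-1)}$, the contribution of the second term is at most the convergent series $\sum_{p}\frac{\log p}{p(p-1)}$, while $\sum_{p<g(x)}\frac{\log p}{p}=\log g(x)+\bigO(1)$ by Mertens' theorem; assembling the three estimates gives the asserted bound $\frac{x\,(\log g(x)+c)}{\log h(x)}$. I do not anticipate any serious obstacle here: the only non-elementary ingredient is the classical Mertens estimate $\sum_{p\le y}\frac{\log p}{p}=\log y+\bigO(1)$, which is needed with leading coefficient exactly $1$ (so the weaker Chebyshev bound $\sum_{p\le y}\log p=\bigO(y)$, which would yield only $\bigO(\log g(x))$, does not suffice); the one point requiring care is the innocuous normalisation $h(x)>1$ that legitimises the Markov step, together with the convention --- read off from the statement --- that ``prime factor less than $g(x)$'' is meant strictly.
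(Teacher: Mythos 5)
Your argument is correct and is essentially the standard first-moment (Markov) argument on the logarithm of the $g(x)$-smooth part of $n$, combined with Mertens' estimate $\sum_{p\le y}\frac{\log p}{p}=\log y+\bigO(1)$; this is the same route as the proof the paper defers to (it cites Bertram rather than reproducing the argument). No gaps.
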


Proofs of the three lemmas above can be found in \cite[pp.\,64--65]{Bertram}.

\section{The Tur{\' a}n-Kubilius Inequality}
\label{sec:TK}

Now we describe and apply the Tur{\' a}n-Kubilius inequality. 
Tur{\' a}n first proved a special case of this inequality  in order to simplify the proof of the famous Hardy-Ramanujan theorem 
(see \cite{Turan}), and Tur{\' a}n's work was later reformulated and generalised by Kubilius (see \cite{Kubilyus}, 
or \cite{Kubilius} for a version in English).  

As is customary in number theory, here a complex-valued  arithmetic function $f\!: \Z \to \C$ is said to be {\em additive\/}  
if $f(mn)=f(m)+f(n)$ whenever $m$ and $n$ are relatively prime.

\begin{theorem}[Turan-Kubilius]\
\label{thm:TK} 
Let $f\!: \Z \to \C$ be an additive complex-valued arithmetic function, and write $p$ for an arbitrary prime,
and $\nu$ for an arbitrary positive integer. 
Also, for given $x \in \N$, let $\mathcal{Z}_x$ be the set of prime powers less than or equal to $x$, and write
$$
A(x)=\sum_{p^\nu \in \mathcal{Z}_x}\frac{f(p^\nu)}{p^\nu}\,(1-\frac{1}{p})
 \quad \hbox{ and } \quad
B(x)^2=\sum_{p^\nu\in\mathcal{Z}_x}\frac{|f(p^\nu)|^2}{p^\nu}.
$$
Then there exists a function $g\!: \Z \to \R$ which converges to zero, such that 
$$
\frac{1}{x}\sum_{n\le x}|f(n)-A(x)|^2 \le (2+g(x))B(x)^2 \ \ \hbox{ for all } x\ge 2.
$$
\end{theorem}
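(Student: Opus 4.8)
The plan is a second--moment (near--independence) argument: expand the left-hand side as a quadratic form in the values $f(p^\nu)$, show that its ``diagonal'' already produces the constant $2$, and show that every other contribution is $o\bigl(B(x)^2\bigr)$ as $x\to\infty$.

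First I would set up the heuristic. Since $f$ is additive we have $f(1)=0$, and for $n\le x$ we may write $f(n)=\sum_{p^\nu\le x,\ p^\nu\,\|\,n}f(p^\nu)$, where $p^\nu\,\|\,n$ means $p^\nu\mid n$ but $p^{\nu+1}\nmid n$. Put $\rho_{p^\nu}=p^{-\nu}(1-p^{-1})$. A short manipulation of floor functions gives $\frac1x\#\{n\le x:\ p^\nu\,\|\,n\}=\rho_{p^\nu}+e_{p^\nu}$ with $|e_{p^\nu}|<1/x$, and, for $p\ne q$, $\frac1x\#\{n\le x:\ p^\nu\,\|\,n\ \text{and}\ q^\mu\,\|\,n\}=\rho_{p^\nu}\rho_{q^\mu}+\bigO(1/x)$, this joint count vanishing outright once $p^\nu q^\mu>x$. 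Thus $A(x)=\sum_{p^\nu\le x}f(p^\nu)\rho_{p^\nu}$ is the ``model mean'' of $f$, while its true mean $\mu:=\frac1x\sum_{n\le x}f(n)=\sum_{p^\nu\le x}f(p^\nu)(\rho_{p^\nu}+e_{p^\nu})$ differs from it by $E:=\mu-A(x)$.

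Next I would expand
$$
\frac1x\sum_{n\le x}\bigl|f(n)-A(x)\bigr|^2=\sum_{p^\nu,\,q^\mu}f(p^\nu)\,\overline{f(q^\mu)}\;c(p^\nu,q^\mu),
\qquad
c(p^\nu,q^\mu):=\frac1x\sum_{n\le x}\bigl(\mathbf 1_{p^\nu\|n}-\rho_{p^\nu}\bigr)\bigl(\mathbf 1_{q^\mu\|n}-\rho_{q^\mu}\bigr),
$$
and split according to three cases. For the diagonal $p^\nu=q^\mu$ one computes $c(p^\nu,p^\nu)=\rho_{p^\nu}(1-\rho_{p^\nu})+e_{p^\nu}(1-2\rho_{p^\nu})\le\rho_{p^\nu}+|e_{p^\nu}|$, so (using $\rho_{p^\nu}<p^{-\nu}$, $|e_{p^\nu}|<1/x$, and the elementary fact $\frac1x\sum_{p^\nu\le x}|f(p^\nu)|^2\le\sum_{p^\nu\le x}|f(p^\nu)|^2/p^\nu=B(x)^2$, valid because each $p^\nu\le x$) the diagonal contributes at most $\sum_{p^\nu\le x}|f(p^\nu)|^2\rho_{p^\nu}+\tfrac1x\sum_{p^\nu\le x}|f(p^\nu)|^2\le B(x)^2+B(x)^2=2B(x)^2$. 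This is exactly where the $2$ comes from, and at this level of precision it cannot be improved. When $p=q$ but $\nu\ne\mu$ the two indicators are disjoint, and the main term of $c$ is $-\rho_{p^\nu}\rho_{p^\mu}\le 0$, so this case only helps. Finally, replacing the centre $A(x)$ by the true mean $\mu$ costs an extra $|E|^2$ together with cross terms; by Cauchy--Schwarz, using $\sum_{p^\nu\le x}p^\nu\ll x^2/\log x$ (prime number theorem) and $\sum_{p^\nu\le x}p^{-\nu}\ll\log\log x$ (Mertens), one finds $|E|^2\ll B(x)^2/\log x$ and $|\overline{A(x)}\,E|\ll B(x)^2\sqrt{\log\log x/\log x}$, both $o\bigl(B(x)^2\bigr)$, and the analogous cross terms occurring in the off--diagonal case are handled the same way.

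The step I expect to be the real obstacle is the off--diagonal case $p\ne q$, i.e.\ showing
$$
\sum_{p\ne q}f(p^\nu)\,\overline{f(q^\mu)}\;c(p^\nu,q^\mu)=o\bigl(B(x)^2\bigr).
$$
Here $|c(p^\nu,q^\mu)|\ll\min\bigl(1/x,\,1/(p^\nu q^\mu)\bigr)$: divisibility by coprime prime powers is nearly independent under the uniform law on $\{1,\dots,x\}$ (whence the $1/x$), while no $n\le x$ is divisible by $p^\nu q^\mu$ once $p^\nu q^\mu>x$. I would attack this by a dyadic decomposition of $[2,x]$ into blocks $(2^i,2^{i+1}]$ with Cauchy--Schwarz inside each block, using the count $\#\{\text{prime powers in }(2^i,2^{i+1}]\}\ll 2^i/i$: the part of the sum with $p^\nu q^\mu\le x$ is then $\ll B(x)^2/\sqrt{\log x}=o\bigl(B(x)^2\bigr)$. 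The part with $p^\nu q^\mu>x$ is subtler, because the sum of the absolute values of its terms is in general only $\bigO\bigl(B(x)^2\bigr)$, not $o\bigl(B(x)^2\bigr)$; one must therefore keep the cancellation, pairing this part with the non--positive ``$p=q$, $\nu\ne\mu$'' contribution and with $-\sum_{p}|M_p|^2$, where $M_p=\sum_{\nu:\,p^\nu\le x}f(p^\nu)\rho_{p^\nu}$ --- equivalently, treating $\bigl(c(p^\nu,q^\mu)\bigr)$ as (essentially) the genuine, positive semidefinite covariance matrix of the indicator vector $\bigl(\mathbf 1_{p^\nu\|n}\bigr)_{p^\nu\le x}$ rather than estimating term by term. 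This bookkeeping is the analytic core of Tur\'an--Kubilius, and I would carry it out along the lines of the classical proofs of Kubilius and Elliott, or of the reformulation of Tao cited above. Granting it, the three cases combine to give $\frac1x\sum_{n\le x}|f(n)-A(x)|^2\le 2B(x)^2+o\bigl(B(x)^2\bigr)$, which is the asserted inequality with $g(x)\to 0$; no separate argument is needed for complex $f$, since nothing above used $f$ to be real (alternatively one may split $f$ into its real and imaginary parts, which are again additive).
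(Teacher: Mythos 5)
The paper does not actually prove Theorem~\ref{thm:TK}; it states it as a known result and points to Tenenbaum and Narkiewicz, so your proposal has to be judged as a standalone argument rather than against an in-paper proof. What you have written is a correct outline of the standard second-moment proof, and the parts you carry out in detail are sound: the expansion of $\frac1x\sum_{n\le x}|f(n)-A(x)|^2$ as a quadratic form in the $f(p^\nu)$ with covariance coefficients $c(p^\nu,q^\mu)$, the diagonal bound $c(p^\nu,p^\nu)\le\rho_{p^\nu}+|e_{p^\nu}|$ producing the constant $2$ via $\frac1x\sum_{p^\nu\le x}|f(p^\nu)|^2\le B(x)^2$, and the correct identification of the off-diagonal regime $p\ne q$ with $p^\nu q^\mu>x$ as the place where termwise estimation fails and cancellation must be retained.

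That last step, however, is not carried out: you explicitly defer ``the analytic core'' --- controlling the off-diagonal sum by completing the square against $-\sum_p|M_p|^2$ and exploiting the (near) positive semidefiniteness of the covariance matrix --- to Kubilius, Elliott and Tao. Since that is essentially the whole content of the Tur\'an--Kubilius inequality, the proposal as written is a proof modulo the same literature the paper already cites, not an independent proof; if the intent is to reprove the theorem, this is the gap that must be filled. Two smaller points should also be repaired in a full write-up. First, the claim that the $p=q$, $\nu\ne\mu$ terms ``only help'' is not literally true for complex $f$, since $f(p^\nu)\overline{f(p^\mu)}$ need not be nonnegative; one has $-\sum_{\nu\ne\mu}f(p^\nu)\overline{f(p^\mu)}\rho_{p^\nu}\rho_{p^\mu}=-|M_p|^2+\sum_\nu|f(p^\nu)|^2\rho_{p^\nu}^2$, and the positive remainder can be of order $B(x)^2$ (it is absorbed because $\rho_{p^\nu}(1-\rho_{p^\nu})+\rho_{p^\nu}^2=\rho_{p^\nu}$, but this must be said rather than waved away). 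Second, once you expand directly about $A(x)$, the digression comparing $A(x)$ with the true mean $\mu$ is redundant and can be cut.
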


\begin{proof}
This can be found in a book by Tenenbaum \cite{Tenenbaum} and/or one by Narkiewicz \cite{Narkiewicz}.
\end{proof}

Next, we prove two things that lead to an important application. 

\begin{lemma}
\label{lem:Infntsize}
Let $m$ be an integer greater than $1$, and let $\mathcal{P}$ be the set of all primes $p$ 
such that $\frac{p-1}{2}$ is coprime to $m$. 
Then $\mathcal{P}$ contains all of the prime terms (perhaps excluding $2$) of some arithmetic sequence $(a+bn)_{n\ge 1}$ 
where $\gcd(a,b) = 1$, and in particular, contains infinitely many such primes.
\end{lemma}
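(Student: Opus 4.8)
The plan is to produce an explicit arithmetic progression whose prime terms all lie in $\mathcal{P}$, and then invoke Dirichlet's theorem (Theorem~\ref{thm:StrongDirichlet}) to guarantee infinitely many such primes. The key observation is that the condition ``$\frac{p-1}{2}$ is coprime to $m$'' is really a condition on the residue of $p$ modulo a suitable modulus built from $m$. First I would reduce to odd primes $p$, so that $\frac{p-1}{2}$ is an integer; this is why the statement permits excluding $2$. For an odd prime $p$, writing $p = 2k+1$ with $k = \frac{p-1}{2}$, the requirement is $\gcd(k, m) = 1$, equivalently $\gcd(p-1, 2m)$ divides $2$ — more precisely, $p-1$ shares no odd prime factor with $m$ and, if $m$ is even, $4 \nmid (p-1)$ is \emph{not} required since we only look at $\frac{p-1}{2}$; what matters is exactly that $k$ avoids every prime divisor of $m$.

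Next I would construct the modulus and residue. Let $q_1, \dots, q_j$ be the distinct prime divisors of $m$, and set $b = 2 q_1 q_2 \cdots q_j$ (or one may simply take $b = 2m$, which has the same radical and works just as well for the divisibility bookkeeping). The goal is to choose $a$ with $\gcd(a,b)=1$ such that every prime $p \equiv a \pmod{b}$ has $\gcd\!\left(\frac{p-1}{2}, m\right) = 1$. Taking $a \equiv 3 \pmod{q_i}$ for each odd prime $q_i \mid m$ forces $p \equiv 3 \pmod{q_i}$, hence $\frac{p-1}{2} \equiv 1 \pmod{q_i}$, so $q_i \nmid \frac{p-1}{2}$; and if $2 \mid m$ we need $\frac{p-1}{2}$ odd, i.e. $p \equiv 3 \pmod 4$, so impose $a \equiv 3 \pmod 4$. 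Finally set $a \equiv 1 \pmod 2$ (automatic from $a \equiv 3 \pmod 4$ when $2 \mid m$, and a free choice making $a$ odd otherwise) so that $\gcd(a,b) = 1$. By the Chinese Remainder Theorem such an $a$ exists, with $1 \le a < b$ and $\gcd(a,b) = 1$. Any prime $p \equiv a \pmod b$ is then automatically odd and satisfies $\gcd\!\left(\frac{p-1}{2}, m\right) = 1$, so $p \in \mathcal{P}$. Dirichlet's theorem (the version in Theorem~\ref{thm:StrongDirichlet}, or just the classical weak form) then gives infinitely many primes in this progression, and all of them (none needing to be excluded, in fact, once $p > b$) lie in $\mathcal{P}$.

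The main thing to be careful about — the only real subtlety — is the interaction at the prime $2$: one must separate the case $2 \mid m$ (where controlling $\frac{p-1}{2}$ modulo $2$ forces the congruence $p \equiv 3 \pmod 4$) from the case $2 \nmid m$ (where no constraint modulo $4$ is needed and one only needs $a$ odd to ensure $\gcd(a,b)=1$). Everything else is a routine application of the Chinese Remainder Theorem plus Dirichlet, and the claimed ``$\gcd(a,b)=1$'' is visibly satisfied because $a$ is chosen coprime to $2$ and to each $q_i$. I would present the construction uniformly by just defining $b = 2m$ and $a$ to be any solution of the listed congruences, then verify the two claimed properties.
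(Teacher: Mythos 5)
Your overall strategy --- pin down a residue class $a \bmod b$ with $b$ built from $2m$ so that every prime in that class lies in $\mathcal{P}$, then apply Dirichlet --- is the right one and is the same in spirit as the paper's. But your specific choice of residues fails in a concrete case: for each odd prime $q_i \mid m$ you impose $a \equiv 3 \pmod{q_i}$, and when $q_i = 3$ this reads $a \equiv 0 \pmod 3$. Since $3 \mid b$, this gives $3 \mid \gcd(a,b)$, so the progression $a + bn$ consists entirely of multiples of $3$ and contains no prime beyond possibly $3$ itself; Dirichlet does not apply, and your assertion that ``$\gcd(a,b)=1$ is visibly satisfied because $a$ is chosen coprime to $2$ and to each $q_i$'' is simply false here. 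Concretely, for $m = 3$ your recipe forces $b = 6$ and $a = 3$, and $3 + 6n$ is never prime for $n \ge 1$. The lemma must cover all $m > 1$, including those divisible by $3$.

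The repair is small. What you actually need at each odd $q_i \mid m$ is $p \not\equiv 1 \pmod{q_i}$ (so that $q_i \nmid \frac{p-1}{2}$) together with $p \not\equiv 0 \pmod{q_i}$ (for coprimality of $a$ and $b$); the residue $3$ meets both conditions only when $q_i \ge 5$. Choosing $a \equiv -1 \pmod{q_i}$ works for every odd prime $q_i$, and the paper does exactly this in one stroke: it takes $a = 2m-1$ and $b = 2m$, so that any prime $p = 2m-1+2mn$ satisfies $\frac{p-1}{2} = (n+1)m - 1 \equiv -1 \pmod{m}$, which is automatically coprime to $m$; this also disposes of the prime $2$ when $2 \mid m$, since $(n+1)m-1$ is then odd. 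That uniform choice avoids both the Chinese Remainder Theorem bookkeeping and the $q_i = 3$ pitfall, but once you replace your residue $3$ by $-1$ your argument goes through and is essentially the paper's.
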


\begin{proof}
Let $\mathcal{Q}$ be the arithmetic sequence $\{2m-1+2mn\}_{n\ge 2}$. Clearly, $2m-1$ and $2m$ are coprime, so $\mathcal{Q}$ contains infinitely many primes. If a prime $p>2$ is in $\mathcal{Q}$ then $p=2m-1+2mn$ for some $n$, hence $\frac{p-1}{2}=(n+1)m-1$ and so $\frac{p-1}{2}$ is coprime to $m$, with that we are done.
\end{proof}

\begin{corollary}
\label{cor:logsizePx}
Let $m$ and $\mathcal{P}$ be as defined in Lemma {\em \ref{lem:Infntsize}}. 
Then for any $\delta \in \R$ such that $0 < \delta < 1$, 
the logarithmic size of $\Px=\mathcal{P}\cap ((\log x)^{1+\delta},\dots,x]$ converges to infinity. 
\end{corollary}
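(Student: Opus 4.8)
The plan is to combine Lemma \ref{lem:Infntsize} with Corollary \ref{SDC}. By Lemma \ref{lem:Infntsize}, the set $\mathcal{P}$ of primes $p$ with $\frac{p-1}{2}$ coprime to $m$ contains all the prime terms (possibly apart from $2$) of some arithmetic progression $(a+bn)_{n\ge 1}$ with $\gcd(a,b)=1$; that is, there exist coprime $a,b$ such that every prime $p > 2$ with $p \equiv a \pmod b$ lies in $\mathcal{P}$. First I would fix such a pair $(a,b)$, and let $\mathcal{V}_x$ denote the set of primes $p \le x$ with $p \equiv a \pmod b$, exactly as in Theorem \ref{thm:StrongDirichlet}. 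Since $\mathcal{V}_x \setminus \{2\} \subseteq \mathcal{P}$, and since $\mathcal{P}_x = \mathcal{P} \cap ((\log x)^{1+\delta}, x]$, for all $x$ large enough that $(\log x)^{1+\delta} > 2$ we have the inclusion $\mathcal{V}'_x = \{p \in \mathcal{V}_x \mid (\log x)^{1+\delta} < p \le x\} \subseteq \mathcal{P}_x$.

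From this inclusion it follows immediately that
$$
\sum_{p \in \mathcal{P}_x} \frac{1}{p} \;\ge\; \sum_{p \in \mathcal{V}'_x} \frac{1}{p},
$$
and the right-hand side tends to infinity as $x \to \infty$ by Corollary \ref{SDC} (applied with this choice of $a,b$ and the same $\delta$). Hence $\ell(\mathcal{P}_x) \to \infty$, which is exactly the claim.

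The only point requiring any care is the bookkeeping around the prime $2$ and around small $x$: Lemma \ref{lem:Infntsize} only guarantees that the prime terms of the progression \emph{other than possibly $2$} lie in $\mathcal{P}$, but since we are truncating below $(\log x)^{1+\delta}$ — which exceeds $2$ for all sufficiently large $x$ — the element $2$ is automatically excluded from $\mathcal{V}'_x$, so it causes no trouble. Everything else is a direct quotation of Corollary \ref{SDC}, so there is no real obstacle here; the corollary is essentially just the composition of the previous two results.
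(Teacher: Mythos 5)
Your proposal is correct and follows essentially the same route as the paper: invoke Lemma \ref{lem:Infntsize} to produce the coprime progression, observe that the truncated set of its prime terms is contained in $\Px$, and conclude via Corollary \ref{SDC}. Your explicit remark that the truncation below $(\log x)^{1+\delta}$ disposes of the possible exception at the prime $2$ is a small point of care that the paper's proof leaves implicit.
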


\begin{proof}
By Lemma \ref{lem:Infntsize}, there exists an arithmetic sequence $(a+bn)_{n\ge 1}$ with $\gcd(a,b) = 1$ 
that contains an infinite set $P(a,b)$ of prime terms.
Then $P(a,b)_x = P(a,b)\cap ((\log x)^{1+\delta},\dots,x]\subset \Px$ and so $\ell(P(a,b)_x)\le \ell(\Px)$ for all $x$, 
and then since $\ell(P(a,b)_x)$ converges to infinity (by Corollary \ref{SDC}), also $\ell(\Px)$ converges to infinity.
\end{proof}

\begin{proposition}
\label{prop:ATK}
Let $m$ be an integer greater than $1$, let $\delta \in \R$ with $0 < \delta < 1$, 
and for $x \in \N$, let $S_x$ be the set of positive integers $n\le x$ such that there exists a prime factor $p$ of $n$ 
with $\frac{p-1}{2}$ coprime to $m$ and $p > (\log x)^{1+\delta}$.
Then $d_x(S_x)$ converges to $1$ as $x$ tends to infinity.
\end{proposition}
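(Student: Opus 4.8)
The plan is to apply the Tur{\'a}n--Kubilius inequality (Theorem~\ref{thm:TK}) to a carefully chosen additive function that counts, for each $n$, the ``useful'' prime factors of $n$ --- namely those primes $p \mid n$ with $\frac{p-1}{2}$ coprime to $m$ and $p > (\log x)^{1+\delta}$. The subtlety is that the set of allowed primes depends on $x$, whereas Turan--Kubilius is stated for a fixed additive function, so I would fix $x$, define $f = f_x$ as the additive function with $f_x(p^\nu) = 1$ if $p \in \Px$ (the set from Corollary~\ref{cor:logsizePx}) and $f_x(p^\nu) = 0$ otherwise, and apply the inequality to this $f_x$ for each $x$, tracking the relevant quantities as $x \to \infty$.

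First I would compute $A(x) = \sum_{p^\nu \in \mathcal{Z}_x} \frac{f_x(p^\nu)}{p^\nu}(1 - \frac1p)$. Only $\nu = 1$ terms with $p \in \Px$ contribute, so $A(x) = \sum_{p \in \Px} \frac1p(1 - \frac1p) = \ell(\Px) - \sum_{p\in\Px}\frac{1}{p^2}$; since the second sum is bounded (by $\sum_p p^{-2} < \infty$) and $\ell(\Px) \to \infty$ by Corollary~\ref{cor:logsizePx}, we get $A(x) \to \infty$, and moreover $A(x) = \ell(\Px) + O(1)$. Similarly $B(x)^2 = \sum_{p \in \Px} \frac1p + \sum_{p\in\Px}\sum_{\nu\ge 2}\frac{1}{p^\nu} = \ell(\Px) + O(1)$, so $B(x)^2 \sim A(x)$. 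The Turan--Kubilius bound then gives $\frac1x \sum_{n \le x} |f_x(n) - A(x)|^2 \le (2 + g(x)) B(x)^2 = O(A(x))$.

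Next I would convert this $L^2$-bound into a density statement via Chebyshev's inequality. If $n \le x$ is \emph{not} in $S_x$, then $n$ has no prime factor in $\Px$ (every useful large prime factor is excluded), so $f_x(n) = 0$, hence $|f_x(n) - A(x)|^2 = A(x)^2$. Therefore
$$
|\{n \le x : n \notin S_x\}| \cdot A(x)^2 \le \sum_{n \le x} |f_x(n) - A(x)|^2 \le x \cdot (2 + g(x)) B(x)^2,
$$
so $\frac{|\{n \le x : n \notin S_x\}|}{x} \le \frac{(2+g(x))B(x)^2}{A(x)^2} = O\!\left(\frac{1}{A(x)}\right) \to 0$ since $A(x) \to \infty$. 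Hence $d_x(S_x) = 1 - \frac{|\{n\le x: n\notin S_x\}|}{x} \to 1$, as required.

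The main obstacle --- really the only point needing care --- is the bookkeeping around the $x$-dependence of $f_x$: one must be sure that the error function $g(x)$ in Turan--Kubilius and the $O(1)$ corrections to $A(x)$ and $B(x)^2$ are genuinely uniform and do not secretly depend on the choice of additive function in a way that spoils the limit. Since Turan--Kubilius gives a single universal $g$ valid for \emph{all} additive $f$ simultaneously, and the $O(1)$ terms come from the absolutely convergent tail $\sum_p \sum_{\nu \ge 2} p^{-\nu} \le \sum_p \frac{1}{p(p-1)}$ which is independent of $f_x$, this works cleanly. The other input, that $A(x) \to \infty$, is exactly Corollary~\ref{cor:logsizePx} (which rests on Lemma~\ref{lem:Infntsize} and Dirichlet's strong theorem), so no extra number theory is needed here.
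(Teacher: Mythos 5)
Your proposal is correct and follows essentially the same route as the paper: the same additive function $f_x$ counting prime factors in $\Px$, the same application of Tur\'an--Kubilius with $A(x)$ and $B(x)^2$ both equal to $\ell(\Px)+O(1)\to\infty$ via Corollary~\ref{cor:logsizePx}, and the same key observation that $f_x(n)=0$ for $n\notin S_x$. Your Chebyshev step is somewhat more direct than the paper's, which first introduces a ``good/bad'' dichotomy at the threshold $B(x)^{1+\epsilon}$ and only then notes that $n\notin S_x$ forces $n$ to be bad; bounding the exceptional set straight away by $x(2+g(x))B(x)^2/A(x)^2=O(x/A(x))$ avoids the auxiliary parameter $\epsilon$ and gives a slightly sharper decay, and your explicit remark that the Tur\'an--Kubilius error term is uniform in the (here $x$-dependent) additive function addresses a point the paper leaves implicit.
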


\begin{proof}
Let $\Px$ denote the set of primes $p$ such that $\gcd(p,m) = 1$ and $(\log x)^{1+\delta}< p\le x$.
We wish to apply the Turan-Kubilius Inequality using an additive function $f$ with the property 
that if $f(n)>1$, then $n\in S_x$, so we take $f$ to be the additive function for which $f(n)$ is the number 
of prime factors of $n$ that lie in $\Px$.
Note that for $\nu \in  \N$ we have $f(p^\nu)=1$ when $p\in\mathcal{P}_x$, and $f(p^\nu)=0$ otherwise. 

Next, let $\mathcal{Z}^\Px$ be the set of positive integers less than or equal to $x$ which are powers (including first powers) of primes in $P_x$. Then, we may write
$$
A(x) = \sum_{p^\nu \in \mathcal{Z}_x}\frac{f(p^\nu)}{p^\nu}\,(1-\frac{1}{p}) 
    = \sum_{p^\nu \in\mathcal{Z}^\mathcal{P}_x}\,\frac{1}{p^\nu}\,(1-\frac{1}{p})
\quad \hbox{ and } \quad  
B(x)^2  =  \sum_{p^\nu\in\mathcal{Z}_x}\frac{|f(p^\nu)|^2}{p^\nu} \\
     =  \sum_{p^\nu\in\mathcal{Z}^\mathcal{P}_x}\,\frac{1}{p^\nu},    
$$
and so by Theorem \ref{thm:TK}, 
$$
\frac{1}{x}\,\sum_{n\le x}\,|f(n)-A(x)|^2 \,\le\, (2+g(x))B(x)^2 \quad \hbox{for all } x \ge 2.
$$

Next, let $\epsilon$ be any real constant $> 0$, and say that $n\in\mathbb{N}$ is `good' if $n\le x$ and $|f(n)-A(x)|< B(x)^{1+\epsilon}$, or `bad' if $n\le x$ and $|f(n)-A(x)|\ge B(x)^{1+\epsilon}$. 
Letting $n_b$ be the number of bad $n\le x$, we see that  

\begin{equation}
 \begin{split}
            \frac{1}{x}\sum_{n\le x}|f(n)-A(x)|^2 &= \frac{1}{x}\sum_{\text{good}\ n}|f(n)-A(x)|^2+\frac{1}{x}\sum_{\text{bad}\ n}|f(n)-A(x)|^2\\
                 &\ge \frac{1}{x}\sum_{\text{good}\ n}|f(n)-A(x)|^2 + \frac{1}{x}\sum_{\text{bad}\ n}(B(x)^{1+\epsilon})^2\\
                 &= \frac{1}{x}\sum_{\text{good}\ n}|f(n)-A(x)|^2 + \frac{n_bB(x)^{2+2\epsilon}}{x}\\
                 &\ge \frac{n_bB(x)^{2+2\epsilon}}{x}, 
 \end{split}
\end{equation}
and hence that 
$$
\frac{n_bB(x)^{2+2\epsilon}}{x}\le\frac{1}{x}\sum_{n\le x}|f(n)-A(x)|^2 \le (2+g(x))B(x)^2,
$$
and therefore 
\begin{equation}\label{nb}
\frac{n_b}{x} \le \frac{2+g(x)}{B(x)^{2\epsilon}}.
\end{equation}

Note that $B(x)^2$ is strictly greater than $\ell(\Px)=\sum_{p\in\Px}\frac{1}{p}$, since $B(x)^2$ includes not only reciprocals of
 primes but also reciprocals of prime-powers. 
Also, Corollary \ref{cor:logsizePx} shows that $\ell(\Px)$ converges to infinity, and therefore so does $B(x)^2$. 
By inequality (\ref{nb}), this implies that the density of the `bad' positive integers is zero, 
so almost all positive integers are `good', and hence that  $|f(n)-A(x)|< 
B(x)^{1+\epsilon}$ almost always. Intuitively this says that if $n$ is good then 
$f(n)$ is at most $B(x)^{1+\epsilon}$ steps away from $A(x)$ when $n\le x$. But as 
$A(x)$ should converge to infinity, and should be large relative to 
$B(x)^{1+\epsilon}$, this also requires $f(n)$ to be large when $n$ is good, 
which, as we have shown, is almost always the case.

It remains to prove that $A(x)$ converges to infinity 
and is large relative to $B(x)^{1+\epsilon}$. 
In fact, all we need to do is show that $A(x)-B(x)^{1+\epsilon}\ge 1$ for sufficiently large $x$. 
We may start by noting that
$$ A(x) 
\ = \ \sum_{p^\nu \in\mathcal{Z}^\mathcal{P}_x}\,\frac{1}{p^\nu}\,(1-\frac{1}{p})
\ = \ \sum_{p^\nu\in \mathcal{Z}^\mathcal{P}_x}\frac{1}{p^\nu}-\sum_{p^\nu\in \mathcal{Z}^\mathcal{P}_x}\frac{1}{p^{v+1}} 
\ = \ B(x)^2-\sum_{p^\nu\in \mathcal{Z}^\mathcal{P}_x}\frac{1}{p^{v+1}}.    
$$

We now seek to bound the sum $G(x)=\sum_{p^\nu\in \mathcal{Z}^\mathcal{P}_x}\frac{1}{p^{v+1}}$. By simple algebraic manipulation (involving the geometric series $\sum_{\nu = 0}^{\infty}(\frac{1}{p})^\nu = \frac{1}{1-\frac{1}{p}}$ 
and the telescoping series $\sum_{n=2}^{\infty} \frac{1}{n^2-n} \ = \sum_{n=2}^{\infty} (\frac{1}{n-1}-\frac{1}{n}) = 1$), 
we see that
\begin{align*}
    G(x)& \ = \ \sum_{p^\nu\in \mathcal{Z}^\mathcal{P}_x}\frac{1}{p^{v+1}}= \sum_{p\in \Px}\sum_{\nu = 2}^{\lfloor\log_p(x)\rfloor}\frac{1}{p^\nu}\\
        &\ < \ \sum_{p\in \Px}\, \sum_{\nu = 2}^{\infty}\,\frac{1}{p^\nu} \ = \ \sum_{p\in \Px}\left(\left(\sum_{\nu = 0}^{\infty}\frac{1}{p^\nu}\right)-1-\frac{1}{p}\right) \ = \ \sum_{p\in \Px}\left( \frac{1}{1-\frac{1}{p}}-1 -\frac{1}{p} \right)\\
        &\ = \ \sum_{p\in \Px}\frac{p^2 -p(p-1)-(p-1)}{p^2-p}\ = \ \sum_{p\in \Px}\frac{1}{p^2-p} \ < \ \sum_{n=2}^{\infty} \frac{1}{n^2-n} \ = \ 1.
\end{align*}

Thus $0< G(x)< 1$, and it follows that $B(x)^2-1 < B(x)^2 - G(x) = A(x) < B(x)^2$. 

Then since $B(x)^2$ converges to infinity, so does $A(x)$, 
and moreover, there exists large enough $x_0 \in \mathbb{N}$ and small enough $\epsilon>0$ to ensure that $B(x)^2-1>B(x)^{1+\epsilon}+1$ for all $x\ge x_0$. 
Hence, for large enough $x$, we have 
\begin{equation}\label{ineq}
    A(x)-B(x)^{1+\epsilon}>1.
\end{equation}

Next, recall the inequality $|f(n)-A(x)|<B(x)^{1+\epsilon}$ derived in the paragraph following (\ref{nb}). 
This inequality holds for almost all $n$, and by expanding it and combining this with (\ref{ineq}), we obtain
$$
1 < A(x)-B(x)^{1+\epsilon}<f(n)<A(x)+B(x)^{1+\epsilon},
$$
which again holds for almost all positive integers $n \le x$. 
Hence in particular, $f(n)>1$, and so $n\in S_x$. 

It follows that the natural density of $S_x$ approaches $1$ as $x\rightarrow\infty$. 

Furthermore, if $n\not\in S_x$, then $f(n)=0$, and so $|f(n)-A(x)|=|A(x)|\ge B(x)^{1+\epsilon}$ for large enough $x$, 
and hence $n$ is bad, which by (\ref{nb})) immediately implies that $$|\{1,\dots,x\}\setminus S_x|\le n_b \le x\left( \frac{2+g(x)}{B(x)^{2\epsilon}}\right).$$
${}$\\[-33pt] 
\end{proof}

\section{Proof of our main theorem}
\label{sec:ProofMainThm}

We now have enough to prove Theorem \ref{thm:main}, namely that for every triple $(r,s,t)$ of positive integers, the natural density of the orders of the finite quotients of the ordinary triangle group $\Delta^+(r,s,t)$ is zero.

\begin{proof}
Let $S_x$ be the same set as in proposition \ref{prop:ATK}. Namely, fixing some real number $0<\delta<1$ and natural number $m>1$ then $S_x$ is the set of natural numbers $n<x$ with a prime factor $p>(\log x)^{1+\delta}$ such that $\gcd(\frac{p-1}{2},m)=1$.

Let $H_x$ denote the set of natural numbers $n$ less than or equal to $x$ with the property that for all prime factors $p>(\log x)^{1+\delta}$ of $n$ then $p^2$ is not a factor of $n$, and let $G_x$ denote the set of natural numbers less than or equal to $x$ with the property that for any $n\in G_x$ if $p>(\log x)^{1+\delta}$ is a prime factor of $n$ then there is no factor $d>1$ of $n$ such that $d\equiv 1 \hbox{ mod }  p$.

Next, let $K_x=S_x\cap G_x\cap H_x$. So any natural number $n\in K_x$ has a prime factor $p$ such that 

\begin{itemize}
    \item $p>(\log x)^{1+\delta}$,
    \item $p^2$ does not divide $n$,
    \item $\gcd(\frac{p-1}{2},m)$=1,
    \item for all factors $d\ne 1$ of $n$, then $d\not\equiv 1 \hbox{ mod }p$.
\end{itemize}
Fix three natural numbers $r,s,t>1$ and set $m=rst$ and choose $x$ large enough to ensure that $(\log x)^{1+\delta}>rst$, then by proposition \ref{prop:threeprops} (b), no element $n$ of $K_x$ can be the order of a finite quotient of $\Delta^+(r,s,t)$. Hence, $\mathcal{Q}(r,s,t)\cap K_x=\emptyset$ meaning $\mathcal{Q}(r,s,t)\cap\{1,\dots,x\}\subset \{1,\dots,x\}\setminus K_x$. However by proposition \ref{prop:ATK}, lemma \ref{lem:B1}, and lemma \ref{lem:B2} it must be that if $\epsilon>0$ is sufficiently small, then $|\{1,\dots,x\}\setminus K_x|$ can be no larger than

$$
N(x) = \frac{x}{(\log x)^{1+\delta}} + \frac{x((\log x) +1)}{(\log x)^{1+\delta}} + \frac{x(2+g(x))}{B(x)^{2\epsilon}}.
$$

But, clearly $N(x)/x \rightarrow 0$ as $x\rightarrow\infty$, and hence the natural density of $\mathcal{Q}(r,s,t)$ must be zero, this completes the proof of our main theorem.

\end{proof}

\bigskip

{\Large\bf Acknowledgements}
\medskip

The author is grateful to his PhD supervisor, Marston Conder and co-supervisors Jeroen Schillewaert and Gabriel Verret,  
and to New Zealand's Marsden Fund for its financial support for his PhD studies via Marston Conder's grant UOA2030. He also acknowledges the helpful use of the {\sc Magma} system \cite{Magma} in investigating quotients of triangle groups in some particular cases. Finally, he is grateful to Ben McReynolds for informing him about the paper by Larsen \cite{Larsen} after the first version of the current paper was completed.


\bigskip

\end{document}